\documentclass[12pt, reqno]{amsart}

\usepackage{amssymb, amsmath, amsthm, amsfonts, mathrsfs,enumerate}
\usepackage[colorlinks=true]{hyperref}
\usepackage[alphabetic,lite]{amsrefs}
\usepackage{amscd}   
\usepackage{fullpage}
\usepackage{tikz-cd} 
\usepackage[all,cmtip]{xy}

\DeclareFontEncoding{OT2}{}{} 


\usepackage{color}


\newtheorem{lemma}{Lemma}

\newtheorem{cor}[lemma]{Corollary}

\newtheorem{claim*}{Claim}
\newtheorem{thm}[lemma]{Theorem}

\theoremstyle{definition}

\newtheorem{rmk}[lemma]{Remark}

\newcommand{\A}{{\mathbb A}}

\newcommand{\PP}{{\mathbb P}}

\newcommand{\F}{{\mathbb F}}
\newcommand{\Q}{{\mathbb Q}}

\newcommand{\Z}{{\mathbb Z}}

\newcommand{\kk}{{\mathbf k}}


\newcommand{\calG}{{\mathcal G}}

\newcommand{\calP}{{\mathcal P}}


\usepackage[mathscr]{euscript}

\newcommand{\scrS}{{\mathscr S}}
\newcommand{\scrT}{{\mathscr T}}


\DeclareMathOperator{\inv}{inv}

\DeclareMathOperator{\Cor}{Cor}

\DeclareMathOperator{\Norm}{Norm}
\DeclareMathOperator{\Br}{Br}

\DeclareMathOperator{\Clif}{Clif}


\newcommand{\eps}{\varepsilon}

\newcommand\numberthis{\addtocounter{equation}{1}\tag{\theequation}}

\setcounter{secnumdepth}{2}


\title{Quartic del Pezzo surfaces without quadratic points}
\author{Brendan Creutz}
\author{Bianca Viray}

\address{School of Mathematics and Statistics, University of Canterbury, Private Bag 4800, Christchurch 8140, New Zealand}
\email{brendan.creutz@canterbury.ac.nz}
\urladdr{http://www.math.canterbury.ac.nz/\~{}b.creutz}

\address{University of Washington, Department of Mathematics, Box 354350, Seattle, WA 98195,~USA}
\email{bviray@uw.edu}
\urladdr{http://math.washington.edu/\~{}bviray}
\thanks{BC was supported in part by the Marsden Fund administered by Royal Society Te Ap\=arangi.}
\thanks{BV was supported in part by NSF grant DMS-2101434.}

\begin{document}

\begin{abstract}
Previous work of the authors showed that every quartic del Pezzo surface over a number field has index dividing $2$ (i.e., has a closed point of degree \(2\) modulo \(4\)), and asked whether such surfaces always have a closed point of degree $2$. We resolve this by constructing infinitely many quartic del Pezzo surfaces over $\Q$ without degree $2$ points. These are the first examples of smooth intersections of two quadrics with index strictly less than the minimal degree of a closed point.
\end{abstract}
	
\maketitle

The index of a variety over a field is the GCD of the degrees of its closed points. In general, the index can be strictly less than the minimal degree of a closed point. For example, the Fermat quartic surface $x_0^4 + x_1^4 + x_2^4 + x_3^4 = 0$ over the finite field $\F_5$ has index $1$ (since it has points over $\F_{5^2}$ and $\F_{5^3}$) but has no closed point of degree $1$. However, for some classes of varieties with simple geometry, the minimal degree of a closed point is always equal to the index. 
This holds for genus 1 curves (by the Riemann-Roch Theorem) and for hypersurfaces of degree at most \(2\) (by Springer's Theorem \cite{Springer}). It has been conjectured by Cassels and Swinnerton-Dyer that the same holds for cubic hypersurfaces. This is known over local fields, but remains open for cubic surfaces over number fields \cite{Coray}.

In this note we consider the situation for smooth intersections of two quadrics. In this case, theorems of Springer, Amer and Brumer~\cites{Springer, Amer, Brumer} combine to show that index $1$ implies the existence of a closed point of degree $1$. We prove that this does not extend to the case of larger index. Specifically, we give the first examples of smooth intersections of two quadrics for which the index is not equal to the minimum degree of a closed point.

\begin{thm}\label{thm:Intro}
	There are infinitely many pairwise non-isomorphic smooth intersections of two quadrics over $\Q$ that have index \(2\), but have no closed points of degree $2$.
\end{thm}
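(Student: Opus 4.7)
The strategy is to construct an explicit infinite family of smooth quartic del Pezzo surfaces $X \subset \PP^4_\Q$, each cut out by two quadrics $Q_1, Q_2$, and to exhibit in each case a single $2$-torsion Brauer class $\alpha \in \Br(X)/\Br(\Q)$ that obstructs both rational points and degree $2$ closed points simultaneously. A natural source for $\alpha$ is the geometry of the pencil $\{\lambda Q_1 + \mu Q_2\}$: the degree $5$ \'etale $\Q$-algebra of singular members supplies a quaternion algebra class $(f,g)$ with $f, g$ suitable rational functions on $X$. We first verify that $\alpha$ provides a Brauer-Manin obstruction to $X(\Q)$, which combined with the authors' divisibility result forces the index to equal $2$ and $X(\Q)$ to be empty.

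The main novelty is ruling out closed points of degree $2$. Since $X(\Q) = \emptyset$, every effective $0$-cycle of degree $2$ is a single closed point of degree $2$, and conversely. We plan to apply the Brauer-Manin reciprocity for $0$-cycles: for any global $0$-cycle $z$ on $X$, one has $\sum_v \inv_v \alpha(z_v) = 0$ in $\Q/\Z$, where $\alpha$ is extended to $0$-cycles by corestriction from residue fields. We aim to show that this sum cannot vanish for any adelic effective degree $2$ cycle $(z_v)$. Locally, such a $z_v$ is of one of three types: (i) a sum of two (possibly equal) $\Q_v$-points, (ii) a doubled $\Q_v$-point (non-reduced), or (iii) a single $K_v$-point for some quadratic field extension $K_v/\Q_v$. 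In cases (i) and (ii) the contribution $\inv_v \alpha(z_v)$ is determined by the image of the evaluation map $X(\Q_v) \to \Br(\Q_v)[2]$ already computed for the rational-point obstruction, while case (iii) is handled via $\Cor_{K_v/\Q_v} \colon \Br(K_v) \to \Br(\Q_v)$ together with the identity $\Cor_{K_v/\Q_v} \circ \Res_{K_v/\Q_v} = 2$. Combining these local descriptions with the $2$-torsion of $\alpha$, we expect to show that the global sum is forced to equal $\tfrac{1}{2} \neq 0$, mirroring the obstruction to rational points.

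To produce infinitely many non-isomorphic examples we vary parameters in the defining quadrics, e.g.\ via an integral twist preserving the local behaviour used in the obstruction. Members are distinguished up to isomorphism by a computable invariant such as the discriminant of the associated degree $5$ \'etale algebra, which will take infinitely many values. The main obstacle is the case (iii) local analysis: pinning down $\Cor_{K_v/\Q_v} \alpha(P)$ for $P \in X(K_v)$, uniformly over all places $v$ and all quadratic $K_v/\Q_v$, in a way consistent with the rational-point obstruction. This uniformity is what makes the obstruction robust enough to simultaneously rule out degree $2$ points coming from any of the infinitely many quadratic extensions of $\Q$, rather than merely finitely many.
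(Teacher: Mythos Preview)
Your approach is fundamentally different from the paper's, and as written it contains a genuine gap that makes it internally inconsistent.

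The paper does \emph{not} work with $\Br(X)$ at all. Instead it introduces an auxiliary variety $\calG \to \PP^1$ whose fibre over $t$ is the Fano scheme of lines on the quadric $V(Q_t)$ in the pencil. The key geometric observation is that any closed point of degree $\le 2$ on $X$ spans a $\Q$-rational line in $\PP^4$ lying on some quadric of the pencil, hence gives a $\Q$-point of $\calG$. One then exhibits (using results from~\cite{CV}) a class $\beta\in\Br(\calG)$ and shows by a direct local computation at $v=3$ that $\calG(\A_\Q)^{\Br}=\emptyset$. This reduces the non-existence of degree $2$ points on $X$ to an ordinary rational-point Brauer--Manin obstruction on a different variety, completely avoiding any analysis of $X$ over quadratic extensions.

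Your proposed argument breaks at case~(iii). The identity $\Cor_{K_v/\Q_v}\circ\Res_{K_v/\Q_v}=[2]$ is irrelevant here: for $P\in X(K_v)$ one has $\alpha_{K_v}(P)\in\Br(K_v)$, and this element is \emph{not} of the form $\Res_{K_v/\Q_v}(\gamma)$ for any $\gamma\in\Br(\Q_v)$, since $P$ does not come from $X(\Q_v)$. In fact $\inv_{\Q_v}\bigl(\Cor_{K_v/\Q_v}(\alpha_{K_v}(P))\bigr)=\inv_{K_v}(\alpha_{K_v}(P))$, which can perfectly well equal $\tfrac12$. Worse, your own logic is self-defeating: if the rational-point obstruction forces each local evaluation $\inv_v\alpha|_{X(\Q_v)}$ to be constant equal to $c_v$ with $\sum_v c_v=\tfrac12$, then every cycle of type (i) or (ii) contributes $2c_v=0$; and if, as you suggest, type~(iii) also contributed $0$ via $\Cor\circ\Res=[2]$, the global sum would be $0$, not $\tfrac12$. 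So either your case~(iii) claim is wrong (it is), or there is no obstruction.

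To salvage the strategy you would need, at a minimum, a place $v_0$ with $X(\Q_{v_0})=\emptyset$ at which $\inv_{K_{v_0}}(\alpha_{K_{v_0}}(P))=\tfrac12$ for \emph{every} quadratic $K_{v_0}/\Q_{v_0}$ and every $P\in X(K_{v_0})$, together with the vanishing of all such evaluations at every other place. This is a much stronger and more delicate statement than anything following from the rational-point computation, and you have given no mechanism for it. You have also not checked that $\Br(X)/\Br_0(X)$ is nontrivial for your surfaces; the paper's use of $\Br(\calG)$ sidesteps this issue entirely, since \cite{CV} guarantees the needed class on $\calG$.
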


In earlier work~\cite{CV}, the authors proved that smooth intersections of two quadrics of dimension at least $2$ over number fields always have index dividing $2$ and, moreover, that degree $2$ points always exist in dimension at least $2$ over local fields, and in dimension at least $3$ over global fields (assuming Schinzel's hypothesis in the number field case).\footnote{Over general fields there are examples in dimension $2$ with index $4$; see \cite{CV}*{Section 7C}.} 
Colliot-Th\'el\`ene subsequently proved the result over number fields unconditionally~\cite{CT}*{Th\'eor\`eme 1.5}. In dimension $1$, it is possible to have index $1,2$ or $4$ over local and global fields, but index and minimal degree of a closed point coincide by the Riemann-Roch theorem. Thus, any smooth intersection of quadrics over a global field with index strictly less than the minimal degree of a closed point must be dimension $2$, in which case it is a quartic del Pezzo surface.

The proof of Theorem~\ref{thm:Intro} is constructive.  
In Section~\ref{sec:Local}, we introduce the family \(X_d\) of quartic del Pezzo surfaces of interest, prove some preliminary results about the local isometry type of quadric threefolds that contain them, and use~\cite{CV} to show that, under some assumptions on \(d\), the quartic del Pezzo \(X_d\) is not contained in any \(\Q\)-rational quadric threefold that contains a \(\Q\)-line. This almost immediately yields Theorem~\ref{thm:Intro} (see Section~\ref{sec:proof} for details).

In personal communication with the authors, Colliot-Th\'el\`ene sketched a proof, developed jointly with Koll\'ar, that any quartic del Pezzo surface with index \(2\) over a field of characteristic different from \(2\) has a point of degree \(2, 6\) or \(14\). In Section~\ref{sec:Degree6}, we observe that the results in~\cite{CV} imply that, assuming Schinzel's hypothesis, quartic del Pezzo surfaces over number fields always have degree \(6\) points. 

We close by giving an indication of how these examples arise.
In Section~\ref{sec:CompareToCV}, we explain how the $2$- and $3$-adic constraints in our examples ensure we are in the exceptional case where~\cite{CV}*{Theorem 1.2(5)} does not give degree $2$ points. In Section~\ref{sec:finding}, we explain how a result of Flynn~\cite{Flynn} enabled us to produce equations of surfaces with our desired properties that have small coefficients and so are amenable to exploration.

\section{A family of quartic del Pezzo surfaces}\label{sec:Local}
   For a nonzero integer $d$, let $X_d \subset \PP^4_\Q$ be the surface defined by the vanishing of the quadratic forms
\begin{align*}
	Q_0 &:= d(u_0^2 - 3u_1^2) + 6u_2^2 + 6u_2u_3 - 6u_3u_4\,,\\
	Q_\infty &:= -2du_0u_1 + u_2^2 - 2u_2u_4 - 3u_3^2 - u_4^2\,.\numberthis\label{eq:Qs}
\end{align*}
Corollary~\ref{cor:main} below shows that for suitable choices of $d$ (e.g., $d = -7$) the surface $X_d$ has no closed points of degree $2$.

\subsection{The pencil of quadrics}
The forms in~\eqref{eq:Qs} define a pencil of quadrics $\mathscr{Q} \subset \PP^4 \times \PP^1$ whose fiber over $t := (a:b) \in \PP^1$ is the quadric $3$-fold $\mathscr{Q}_t = V(bQ_0 + aQ_\infty) \subset \PP^4$; the quadrics in this pencil are exactly the quadric threefolds that contain \(X_d\). For a point $(a:1) \in \PP^1$ we will write $Q_a = Q_0 + aQ_\infty$. Let $M_0,M_\infty$ be the symmetric matrices corresponding to $Q_0,Q_\infty$ and let 
\[
	\Phi(T) := \det(M_0 + TM_\infty) = -6d^2(T^2+3)(T^3 + 3T^2 + 3T - 9)\,.
\]
Let $\scrS \subset \PP^1$ be the subscheme defined by the vanishing of $\Phi(T)$; the singular quadrics in the pencil all lie above $\scrS$. Let $\scrT$ and $\scrT'$ be the degree $2$ and degree $3$ subschemes defined by the irreducible factors of $\Phi(T)$, and let \(\zeta\) denote a root of the cubic factor of \(\Phi(T)\). The fibers of $\mathscr{Q} \to \PP^1$ above $\scrT$ and $\scrT'$ are rank $4$ quadrics with discriminants (i.e., the signed determinant of the restriction of the corresponding quadratic form to a smooth hyperplane section) given by $\eps_\scrT = [d]\in\kk(\scrT)^\times/\kk(\scrT)^{\times 2}$ and $\eps_{\scrT'} = [\zeta]\in\kk(\scrT')^\times/\kk(\scrT')^{\times 2}$ (See \cite[Section 4A]{CV}).

\subsection{Lines on the quadrics over local fields}

\begin{lemma}\label{lem:lines}
	Let $Q$ be a rank $5$ quadratic form over a field $k$ of characteristic not equal to $2$ and assume that $Q$ is an orthogonal sum of a hyperbolic plane and a rank $3$ form. The quadric $3$-fold $V(Q) \subset \PP^4_k$ contains a $k$-rational line if and only if the conic in $\PP^2_k$ defined by the rank $3$ form contains a $k$-rational point.
\end{lemma}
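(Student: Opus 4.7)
The plan is to recast the statement in the language of Witt decompositions: a quadric $V(Q) \subset \PP^{n-1}_k$ contains a $k$-linear subspace $\PP^{m-1}$ if and only if $Q$ admits a totally isotropic $k$-subspace of dimension $m$, i.e., if and only if the Witt index of $Q$ is at least $m$. Applied here, $V(Q)$ contains a $k$-line exactly when the Witt index of $Q$ is at least $2$.

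Writing $Q = H \perp q$ with $H$ a hyperbolic plane and $q$ a rank $3$ form, the Witt cancellation theorem (valid since $\Char(k) \neq 2$) gives that the Witt index of $Q$ equals $1 + w(q)$, where $w(q) \in \{0,1\}$ is the Witt index of the rank $3$ form $q$. Thus $V(Q)$ contains a $k$-line iff $w(q) \geq 1$, iff $q$ is isotropic over $k$, iff the conic $V(q) \subset \PP^2_k$ has a $k$-rational point. This is the desired equivalence.

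To make the proof self-contained I would spell out both directions concretely. For the forward direction I would invoke (or briefly record) the standard fact that a maximal totally isotropic subspace of a $k$-line on $V(Q)$ corresponds to a $2$-dimensional isotropic subspace of the underlying vector space, and then apply Witt cancellation to remove the hyperbolic plane $H$ and produce an isotropic vector in the rank $3$ complement. For the reverse direction, a $k$-point $(a_2:a_3:a_4)$ of $V(q)$ combined with the two isotropic vectors of $H$ gives, after choosing coordinates so that $H(x_0,x_1) = x_0x_1$, an explicit $k$-line $\{(0:t:sa_2:sa_3:sa_4) : (s:t) \in \PP^1\} \subset V(Q)$.

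No real obstacle is expected; the main content is the Witt cancellation theorem, which is a standard tool in characteristic not $2$. The only small care needed is to confirm the characteristic hypothesis is sufficient (so that orthogonal decompositions behave well and Witt cancellation applies) and to make the forward direction precise enough that it does not implicitly assume the line is disjoint from the base locus of the projection away from the hyperbolic plane.
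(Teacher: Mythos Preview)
Your proposal is correct and essentially identical to the paper's proof. Both arguments use Witt cancellation for the implication ``line on $V(Q)$ $\Rightarrow$ $k$-point on the conic,'' and for the reverse implication the paper's description of $V(Q,u_0)$ as a cone over $V(q)$ with vertex $(0{:}1{:}0{:}0{:}0)$ is exactly the geometric content of your explicit line $\{(0{:}t{:}sa_2{:}sa_3{:}sa_4)\}$.
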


\begin{proof}
	We can assume $Q = u_0u_1 + q(u_2,u_3,u_4)$. The hyperplane tangent to $V(Q)$ at the $k$-point $P = (0:1:0:0:0)$ is $V(u_0)$. The intersection $V(Q,u_0)$ is a cone over the conic $V(q) \subset \PP^2_k$. Hence, if the conic contains a $k$-point, then $V(Q)$ contains a $k$-rational line. On the other hand, if $V(Q)$ contains a $k$-rational line, then $Q$ contains two hyperbolic planes~\cite{Lam}*{Chapter I, Theorem 3.4}. By Witt's Cancellation Theorem~\cite{Lam}*{Chapter I, Theorem 4.2}, the rank $3$ form must also contain a hyperbolic plane. Equivalently, this means that the conic has a $k$-point.
\end{proof}

\begin{lemma}\label{lem:Ginfty}
   Let $v$ be a place of $\Q$. The quadric $V(Q_\infty)$ contains a $\Q_v$-rational line if and only if $v \not\in \{2,3\}.$
   \end{lemma}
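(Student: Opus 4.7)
The plan is to put $Q_\infty$ into the form required by Lemma~\ref{lem:lines} and then translate the rational solvability of the resulting conic into a Hilbert symbol computation.

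First, I would observe that $-2du_0u_1$ is a hyperbolic plane in $(u_0,u_1)$, and that the change of variables $v := u_2 - u_4$ diagonalizes the remaining rank~$3$ summand:
\[
	u_2^2 - 2u_2u_4 - 3u_3^2 - u_4^2 = (u_2-u_4)^2 - 2u_4^2 - 3u_3^2.
\]
Thus $Q_\infty$ is isometric over $\Q$ to an orthogonal sum of a hyperbolic plane and the rank~$3$ form $\langle 1,-2,-3\rangle$. By Lemma~\ref{lem:lines}, $V(Q_\infty)$ contains a $\Q_v$-rational line if and only if the conic $C : x^2 - 2y^2 - 3z^2 = 0$ has a $\Q_v$-point.

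Next, I would rewrite solvability of $C$ over $\Q_v$ in terms of the Hilbert symbol: dividing through by $z^2$ (if $z \ne 0$; the case $z=0$ is handled separately and gives no points since $2$ is never a square modulo issues at the relevant places), the conic has a $\Q_v$-point if and only if $3$ is a norm from $\Q_v(\sqrt{2})$, which is exactly $(2,3)_v = 1$, i.e.\ $\inv_v(2,3) = 0$.

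Finally, I would compute the Hilbert symbol $(2,3)_v$ at every place of $\Q$. At the archimedean place and at every odd prime $p \notin\{2,3\}$, both $2$ and $3$ are units (and positive at $\infty$), so $(2,3)_v = 1$. At $v = 3$ the standard formula for the tame Hilbert symbol gives $(2,3)_3 = \left(\tfrac{2}{3}\right) = -1$ since $2$ is not a square modulo $3$. The product formula then forces $(2,3)_2 = -1$. Consequently $\inv_v(2,3) = 0$ if and only if $v \notin \{2,3\}$, which completes the proof.

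The only mildly non-routine step is the reduction of the rank~$3$ summand to the diagonal form $\langle 1,-2,-3\rangle$; everything else is a direct application of Lemma~\ref{lem:lines} and the classical theory of Hilbert symbols over $\Q$.
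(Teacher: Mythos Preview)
Your proof is correct and follows essentially the same approach as the paper: both decompose $Q_\infty$ as a hyperbolic plane plus the diagonal form $\langle 1,-2,-3\rangle$, invoke Lemma~\ref{lem:lines}, and identify the resulting conic with the quaternion algebra $(2,3)$ ramified exactly at $2$ and $3$. Your parenthetical about the case $z=0$ is slightly garbled (for instance $2$ \emph{is} a square in $\Q_7$) but harmless, since the equivalence between nontrivial solvability of $x^2-2y^2-3z^2=0$ over $\Q_v$ and $(2,3)_v=1$ is the very definition of the Hilbert symbol and requires no separate case analysis.
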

   \begin{proof}
   	The form $Q_\infty$ is (over $\Q$) the direct sum of a hyperbolic plane and the rank $3$ form $u_2^2 - 2u_2u_4 - 3u_3^2 - u_4^2$. The latter diagonalizes as \((u_2 - u_4)^2 - 3u_3^2 - 2(u_4)^2\) and, hence, (by definition of the Hilbert symbol) has a nontrivial solution over \(\Q_v\) if and only if \((2,3)_v=1\). One can compute that \((2,3)_v = 1\) if and only if \(v\notin \{2,3\}\). Now apply Lemma~\ref{lem:lines}.	
   \end{proof}

   \begin{lemma}\label{lem:G3}
   	For all $b\in \Z_3$, the quadric \(V(bQ_0 + Q_{\infty}) \subset \PP^4_{\Q_3}\) contains no \(\Q_3\)-rational lines.
   \end{lemma}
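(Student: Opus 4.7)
The plan is to exploit the block-diagonal structure of the pencil: writing the variables as $(u_0, u_1;\, u_2, u_3, u_4)$, both $Q_0$ and $Q_\infty$ split as a binary form in $(u_0, u_1)$ orthogonal to a ternary form in $(u_2, u_3, u_4)$. Consequently, for every $b \in \Q_3$ one has $Q_b := bQ_0 + Q_\infty \cong A_b \perp B_b$ over $\Q_3$, with $A_b$ binary and $B_b$ ternary. The strategy is to show that, for $b \in \Z_3$, $A_b$ is a hyperbolic plane and $B_b$ is anisotropic, and then to apply Lemma~\ref{lem:lines}.

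First I will check that $A_b$ is hyperbolic over $\Q_3$. Its Gram matrix $\left(\begin{smallmatrix} bd & -d \\ -d & -3bd \end{smallmatrix}\right)$ has discriminant $-d^2(3b^2+1)$, congruent to $-1$ modulo $\Q_3^{\times 2}$ because $3b^2+1$ is a unit in $\Z_3$ with residue $1 \pmod 3$, hence a square by Hensel's lemma. A binary form of discriminant $-1$ over $\Q_3$ is hyperbolic, so $Q_b \cong H \perp B_b$ over $\Q_3$, and Lemma~\ref{lem:lines} reduces the problem to showing that $B_b$ (which is visibly independent of $d$) is anisotropic over $\Q_3$.

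To this end I will explicitly diagonalize $B_b$ over $\Q_3$. Replacing $u_3$ by $u_3' := u_3 - \tfrac{3b}{3b+1} u_2 - \tfrac{9b^2}{3b+1} u_4$ orthogonalizes that direction---the denominators lie in $\Z_3^\times$ since $3b+1 \equiv 1 \pmod 3$---and a direct calculation yields $B_b(u_3') = 3(9b^3 - 3b^2 - 3b - 1)/(3b+1)$, an element of $3$-adic valuation one whose leading unit is $\equiv -1 \pmod 3$, hence lying in the square class of $6$ in $\Q_3^\times$. Completing the square on the complementary $(u_2, u_4)$-form diagonalizes it as $\langle 6b+1,\; -2(3b+1)/(6b+1) \rangle$; both entries are units $\equiv 1 \pmod 3$, hence squares in $\Q_3^\times$, so this summand is isometric to $\langle 1, 1 \rangle$. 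Assembling gives $B_b \cong \langle 1, 1, 6 \rangle$ over $\Q_3$.

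Finally, $\langle 1, 1, 6 \rangle$ is anisotropic over $\Q_3$: in any nontrivial zero $(x,y,z)$ of $x^2 + y^2 + 6z^2$ one must have $z \neq 0$ (since $-1 \notin \Q_3^{\times 2}$), but then $x^2+y^2 = -6z^2$ has odd $3$-adic valuation, contradicting the fact that $x^2+y^2$---a norm from the unramified quadratic $\Q_3(\sqrt{-1})$---has even $3$-adic valuation. The main obstacle is not conceptual but bookkeeping: one must check that all denominators arising in the diagonalization are $3$-adic units (which is automatic from $b \in \Z_3$ via $3b+1, 6b+1 \equiv 1 \pmod 3$) and track the resulting square classes uniformly in $b$.
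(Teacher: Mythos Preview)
Your proof is correct and follows essentially the same approach as the paper: both use the block decomposition $Q_b = A_b \perp B_b$, check that $A_b$ is a hyperbolic plane over $\Q_3$ via its discriminant, diagonalize the ternary form $B_b$ and reduce its coefficients modulo $\Q_3^{\times 2}$ (the paper completes the square starting from $u_4$ to obtain $\langle -1,2,-3\rangle$, you orthogonalize $u_3$ first to obtain the isometric form $\langle 1,1,6\rangle$), and then invoke Lemma~\ref{lem:lines}. The only cosmetic difference is the order of the square-completions and the resulting presentation of the anisotropic conic.
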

   \begin{proof}
   	First note that all points in $\scrS(\overline{\Q_3})$ reduce to \((0:1)\in \PP^1(\overline{\F_3})\), and thus that $(1:b) \not\in \scrS$. This means that $bQ_0+Q_\infty$ has rank $5$. In particular, it is the orthogonal sum of the 
rank \(2\) form \(d(bu_0^2 - 2u_0u_1 - 3bu_1^2)\) and a rank $3$ form which diagonalizes as
      \begin{equation}\label{eq:conic}
         -(u_4 + u_2 + 3bu_3)^2 + 2(1 + 3b)\left(u_2 + \frac{6b}{2 + 6b}u_3\right)^2 - 3\left(1 + 3b^2\cdot\frac{1 - 3b}{1 + 3b}\right)u_3^2\,.
      \end{equation}
      Since \(b\in \Z_3\), the rank \(2\) form is congruent to \(-2u_0u_1\) or \(bu_0^2 - 2u_0u_1\) modulo \(3\), so in particular is isotropic, i.e., a hyperbolic plane, over \(\Q_3\).
      %
      We may therefore apply Lemma~\ref{lem:lines} to the quadric \(V(bQ_0 + Q_{\infty})\) over $\Q_3$. This gives that \(V(bQ_0 + Q_{\infty})\) contains a line over $\Q_3$ if and only if the conic defined by~\eqref{eq:conic} contains a $\Q_3$-point. For any \(b\in \Z_3\), we have that \(1 + 3b\) and \(1 + 3b^2\cdot\frac{1 - 3b}{1 + 3b}\) are squares in \(\Z_3\). So the Hilbert symbol associated to this conic is $(2,-3)_3$, which is nontrivial.
  \end{proof}

  \begin{rmk}
  	For $v|6$, one can find quadrics in the pencil containing a $\Q_v$-line, as is implied by the local case in~\cite{CV}*{Theorem 1.2}. As the existence of these quadrics is not needed for the proof of Theorem~\ref{thm:Intro}, we omit the details.
  \end{rmk}

\subsection{Lines on the quadrics over $\Q$}\label{sec:Adelic}

\begin{thm}\label{thm:d}
	Suppose the integer $d$ satisfies the following:
\begin{enumerate}[(i)]
	\item\label{it:1} $d \equiv 5 \bmod 12$, and
	\item\label{it:4} for all primes $v$ dividing $d$, we have that $\zeta$ is a square in $\Q_v\otimes \Q(\zeta)$.
\end{enumerate}
Then no quadric in $\PP^4_\Q$ containing $X_d$ contains a line defined over $\Q$.
\end{thm}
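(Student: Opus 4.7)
My plan is a local-to-global argument. First, every quadric in the pencil over $\Q$ has full rank $5$: since $T^3+3T^2+3T-9$ has no rational roots (by the rational root theorem) and $T^2+3$ is irreducible over $\Q$, $\scrS(\Q)$ is empty. The classification of nondegenerate quadratic forms over $\Q$ and its localizations then gives the criterion: for a rank-$5$ form $q$ over $\Q$, the quadric $V(q) \subset \PP^4_{\Q}$ contains a $\Q$-rational line if and only if the Hasse--Witt invariant satisfies $\epsilon_v(q) = (-1,-1)_v$ at every place $v$ (equivalently, $\epsilon(q) = (-1,-1) \in \Br(\Q)[2]$). Hence it suffices, for each $(a:b) \in \PP^1(\Q)$, to find some place $v$ at which $\epsilon_v(bQ_0 + aQ_\infty) \neq (-1,-1)_v$.

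The first step disposes of the cases where an obstruction at $v = 3$ already works. Writing $(a:b) \in \PP^1(\Q)$ in coprime integer coordinates, if $v_3(b) \geq v_3(a)$ then (treating $a = 0$ separately) we may rescale over $\Q_3$ to bring the quadric into the form $V(b'Q_0 + Q_\infty)$ with $b' := b/a \in \Z_3$, and Lemma~\ref{lem:G3} (respectively Lemma~\ref{lem:Ginfty}) shows that this quadric has no $\Q_3$-line. This reduces the problem to the case $v_3(a) > v_3(b)$, i.e., $3 \mid a$ and $3 \nmid b$ in coprime integer coordinates; for these $(a:b)$ the $3$-adic invariant is insufficient.

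In this remaining case I will seek an obstruction at either $v=2$ or some prime $v \mid d$. The key computational tool is the decomposition
\[
bQ_0 + aQ_\infty \;\sim\; \langle bd,\, -bd(a^2+3b^2)\rangle \;\perp\; R_{a,b}\,,
\]
obtained by completing the square in $u_0, u_1$, where $R_{a,b}$ is the rank-$3$ form in $u_2, u_3, u_4$ and has discriminant $6(a^3+3a^2b+3ab^2-9b^3)$ modulo squares. Expanding $\epsilon_v$ via this decomposition and using standard Hilbert-symbol identities expresses $\epsilon_v(bQ_0 + aQ_\infty) + (-1,-1)_v \in \Br(\Q_v)[2]$ as an explicit quaternion algebra involving $d$, $a^2+3b^2$, and $a^3+3a^2b+3ab^2-9b^3$. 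Condition~(i) pins down $d$ modulo $12$, which controls the $2$-adic piece (via $d \bmod 4$) and the $3$-adic interaction (via $d \bmod 3$), while condition~(ii) arranges that at each $v \mid d$ the discriminant of $Q_{\scrT'}$ is a square, forcing a specific local square class for $a^3+3a^2b+3ab^2-9b^3$. The hard part will be the final bookkeeping: verifying, by a case analysis on the residues of $a, b$ at $2$ and at each $v \mid d$, that the resulting Hilbert symbol is nontrivial at $v$ for some $v \in \{2\} \cup \{v' : v' \mid d\}$ and every admissible $(a:b)$.
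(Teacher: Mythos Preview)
Your reduction to a place-by-place Hasse--Witt computation is sound: the criterion $\epsilon_v(q)=(-1,-1)_v$ for a rank-$5$ form to have Witt index $2$ is correct, and your Case~A (where $b/a\in\Z_3$, handled at $v=3$ by Lemmas~\ref{lem:Ginfty} and~\ref{lem:G3}) is fine apart from a harmless slip (it is $b=0$, not $a=0$, that needs separate treatment).

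The gap is in Case~B. You assert that for every $(a:b)$ with $3\mid a$, $3\nmid b$ the obstruction can be located at some $v\in\{2\}\cup\{v:v\mid d\}$, to be verified by a residue analysis at those primes. This is neither carried out nor clearly true: the Hasse invariant of $bQ_0+aQ_\infty$ involves the factors $a^2+3b^2$ and $a^3+3a^2b+3ab^2-9b^3$, and the place where $\epsilon_v\neq(-1,-1)_v$ can perfectly well be a prime dividing one of these rather than a prime dividing $2d$. Since the primes dividing $d$ form an unbounded set as $d$ varies over integers satisfying (i)--(ii), and since condition~(ii) does not translate into residue conditions on $a,b$, a uniform case analysis of the kind you sketch does not get off the ground.

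The paper sidesteps exactly this difficulty by working with the Brauer class $\beta=\pi^*(d,T^2+3)\in\Br(\calG)$ and global reciprocity. One shows that $\inv_v\circ\beta$ is identically $0$ on $\calG(\Q_v)$ for every $v\neq3$ (using good reduction away from a finite set, condition~(i) at $v\in\{2,\infty\}$, and condition~(ii) together with the alternate expression~\eqref{eq:beta2} at $v\mid d$), while Lemma~\ref{lem:G3} forces $\inv_3\circ\beta\equiv\tfrac12$. The product formula then gives the contradiction without ever naming, for a given $t$, the particular $v\neq3$ at which $\calG_t(\Q_v)=\emptyset$. Your Case~B would in effect need this reciprocity step to be completed; a purely local bookkeeping at $\{2\}\cup\{v\mid d\}$ is not enough.
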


\begin{proof}
Any quadric in $\PP^4_\Q$ containing $X_d$ must lie in the pencil spanned by $Q_0$ and $Q_\infty$. Let $\pi\colon\calG \to \PP^1$ be the scheme such that $\calG_t := \pi^{-1}(t)$ parameterizes the lines on the quadric $3$-fold $V(Q_t) \subset \PP^4$. The generic fiber of $\calG \to \PP^1$ is a Brauer-Severi variety and $\calG$ is smooth over $\Q$ by~\cite{CV}*{Proposition 4.2}. We must show that $\calG(\Q)= \emptyset$. We will do this by showing that there is a Brauer-Manin obstruction to rational points on $\calG$ (We refer to~\cite{CTS-BrauerBook}*{Section 13.3} for an introduction to the Brauer-Manin obstruction). 

By \cite{CV}*{Proposition 5.1} we have that $\Br(\calG)$ is generated modulo constant algebras by 
\begin{equation}\label{eq:beta}
\beta := 
\pi^*(d,T^2 + 3) = \pi^*{\Cor_{\Q(\sqrt{-3})/\Q}(d, T - \sqrt{-3})}\,,
\end{equation}
where $(d,T^2+3)$ denotes the quaternion algebra in $\Br(\kk(\PP^1)) = \Br(\Q(T))$. By \cite{CV}*{Proposition 5.1} and Lemma~\ref{lem:Ginfty}, $\beta$ may also be expressed as
\begin{equation}\label{eq:beta2}
	\beta = \pi^*\Cor_{\Q(\zeta)/\Q}(\zeta, T - \zeta) + [\calG_\infty] = \pi^*\left(\Cor_{\Q(\zeta)/\Q}(\zeta, T - \zeta) + (2,3)\right)\,.
\end{equation}

Let $S = \{ 2,3,5,\infty \} \cup \{ v \;:\; v \mid d \}$. The discriminant of $\Phi(T)$ is an $S$-unit, so the equations defining the pencil give a smooth model of $\calG$ over $\Z_S$. Then, for $v \not\in S$, the evaluation map $\inv_v\circ \beta \colon \calG(\Q_v) \to \Q/\Z$ is constant by the results of \cite{GoodReductionBM}. By evaluating at points of \(\calG_{\infty}\) (which exist by Lemma~\ref{lem:Ginfty}), we deduce that \(\inv_v\circ \beta\) is identically \(0\) for \(v\notin S\). 

Now consider \(v\in S\). If \(d\) is a square in \(\Q(\sqrt{-3})\otimes_{\Q} \Q_v\), then~\eqref{eq:beta} gives that $\inv_v\circ \beta = 0$. This holds for $v \in \{2,\infty\}$, since \(d\equiv 1 \bmod 4\) by~\eqref{it:1}. If \(\zeta\) is a square in \(\Q(\zeta)\otimes_{\Q} \Q_v\), then~\eqref{eq:beta2} gives that $\inv_v\circ \beta = \inv_v(2,3)$. This holds for $v = 5$ by an easy computation and for $v \mid d$ by Condition~\eqref{it:4}. Moreover, $d$ is prime to $6$ by~\eqref{it:1}, so $\inv_v\circ \beta = \inv_v(2,3) = 0$ for all $v \mid 5d$.
Finally, consider $v = 3$. By Lemma~\ref{lem:G3}, if $P \in \calG(\Q_3)$, then $\pi(P) = (a:1)$ for some \(a\in3\Z_3\). Since $d \equiv 2 \bmod 3$ by~\eqref{it:1}, we then have
\(
	\inv_3(\beta(P)) = \inv_3(d,a^2 + 3) = \frac12\,.
\)

Thus, for any $(P_v) \in \prod_v \calG(\Q_v) = \calG(\A_\Q)$, we have
\(
	\sum_v \inv_v(\beta(P_v)) = \inv_3(\beta(P_v)) = \frac12\,.
\)
This means there is a Brauer-Manin obstruction to the existence of $\Q$-points on $\calG$. Indeed, any $\Q$-point $P \in \calG(\Q) \subset \calG(\A_\Q)$ would have $\beta(P) \in \Br(\Q)$ so $\sum_v\inv_v(\beta(P)) = 0 \neq \frac12$.
   \end{proof}

\subsection{The Proof of Theorem~\ref{thm:Intro}}\label{sec:proof}
\begin{cor}\label{cor:main}
	If $d$ satisfies the conditions of Theorem~\ref{thm:d}, then the surface $X_d$ has no closed points of degree dividing $2$.
\end{cor}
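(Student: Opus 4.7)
The plan is to show that a closed point of $X_d$ of degree $1$ or $2$ would force some quadric in the pencil spanned by $Q_0$ and $Q_\infty$ to contain a $\Q$-rational line, contradicting Theorem~\ref{thm:d}. The two cases are handled by essentially the same restriction-to-a-line argument, applied to slightly different $\Q$-lines.

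For the degree $2$ case, I would let $p$ be a closed point of $X_d$ with residue field a quadratic extension of $\Q$, and let $L \subset \PP^4_\Q$ be the $\Q$-rational line spanned by the two Galois-conjugate geometric points of $p$ (which is $\Q$-rational because Galois fixes $L$ setwise). Each quadric $V(Q)$ in the pencil contains both conjugate points, so $Q|_L$ is a $\Q$-rational section of $\OO_L(2)$ whose zero divisor contains $p$. If $Q_0|_L$ or $Q_\infty|_L$ vanishes identically, Theorem~\ref{thm:d} is directly contradicted by $L \subset V(Q_0)$ or $L \subset V(Q_\infty)$; otherwise both sections are nonzero of degree $2$ with $p$ in their zero divisors, hence with zero divisor equal to $p$, and so they differ by a $\Q^\times$-scalar $\lambda$. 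Then $(Q_0 - \lambda Q_\infty)|_L = 0$, so $V(Q_0 - \lambda Q_\infty)$ is a $\Q$-quadric in the pencil containing $L$.

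For the degree $1$ case, I would pick $p \in X_d(\Q)$ and choose any $\Q$-rational line $L$ through $p$ in the embedded tangent plane $T_pX_d = T_pV(Q_0) \cap T_pV(Q_\infty)$; since $T_pX_d$ is a $\Q$-rational $2$-plane, there is a $\PP^1(\Q)$'s worth of such lines. Because $T_pX_d \subset T_pV(Q_t)$ for every $t$ in the pencil, $Q_t|_L$ has a double zero at $p$; in an affine coordinate $x$ on $L$ vanishing at $p$, this reads $Q_t|_L = c(t)\,x^2$ with $c(t)$ a linear form in $t$. If $c \equiv 0$, then $L \subset X_d$ is contained in every pencil quadric; otherwise $c$ has a unique zero $t_0 \in \PP^1(\Q)$ and $L \subset V(Q_{t_0})$. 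Either way, Theorem~\ref{thm:d} is contradicted.

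I do not anticipate any serious obstacle here: the core input is Theorem~\ref{thm:d}, and the reduction is essentially formal linear algebra on a single line. The only subtleties are the Galois-descent checks for the $\Q$-rationality of $L$, the scalar $\lambda$, and the parameter $t_0$, each of which is immediate from the fact that all the input data is $\Q$-rational.
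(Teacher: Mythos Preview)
Your argument is correct and follows essentially the same route as the paper: for degree $2$, take the $\Q$-line through the two conjugate geometric points and show it lies on some quadric of the pencil; for degree $1$, take a tangent line through the rational point and do the same. You supply the linear-algebra details (the restriction-to-$L$ computation) that the paper leaves implicit or defers to \cite{CV}*{Proposition 4.1}. The only difference is that the paper also notes a shortcut for the degree $1$ case, observing directly that $X_d(\Q_3)=\emptyset$.
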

\begin{proof}
	If $X_d$ contains a degree $2$ point, then the line in $\PP^4_\Q$ between that point and its conjugate is defined over $\Q$ and lies on some quadric in the pencil. But by Theorem~\ref{thm:d}, no quadric in the pencil contains a $\Q$-line, so $X_d$ has no degree $2$ points. A similar argument shows there are no degree $1$ points; see \cite{CV}*{Proposition 4.4} for further details. Alternatively, one can simply note that $X_d(\Q_3) = \emptyset$ to deduce that $X_d(\Q) = \emptyset$.
\end{proof}
\begin{lemma}\label{lem:Nonisomorphic}
   Let \(d, d'\) be two nonzero integers. If \(dd'\notin \Q(\sqrt{-3})^{\times2}\), then \(X_d\not\simeq X_{d'}\).
\end{lemma}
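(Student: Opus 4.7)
The plan is to upgrade any isomorphism $X_d \simeq X_{d'}$ to a linear automorphism of the ambient $\PP^4$, use it to produce a $\Q$-automorphism of the base $\PP^1$ of the pencil, show that this base automorphism must be the identity, and then compare the discriminants of the rank-$4$ quadrics above $\scrT$.

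Since a smooth quartic del Pezzo surface is embedded anticanonically, any $\Q$-isomorphism $f\colon X_d \to X_{d'}$ pulls back $-K_{X_{d'}}$ to $-K_{X_d}$ and therefore lifts to a linear automorphism $\tilde f \in \PGL_5(\Q)$ of $\PP^4$. The pencil of quadrics through a smooth quartic del Pezzo surface is intrinsic to the surface (being $H^0(\PP^4, \calI_X(2))$), so $\tilde f$ carries the pencil through $X_d$ onto the pencil through $X_{d'}$ and induces a $\Q$-automorphism $\phi \in \PGL_2(\Q)$ of the base $\PP^1$. Since $\phi$ sends singular fibres to singular fibres of the same corank, and both pencils share the discriminant locus $\scrS = \scrT \sqcup \scrT'$ with $\scrT, \scrT'$ irreducible over $\Q$ of distinct degrees $2$ and $3$, the automorphism $\phi$ preserves each of $\scrT$ and $\scrT'$.

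The key step is then to show $\phi = \id$. The substitution $T = S - 1$ converts the cubic $T^3+3T^2+3T-9$ into $S^3 - 10$, so the splitting field of $\scrT'$ over $\Q$ is $\Q(\sqrt[3]{10}, \omega)$, with Galois group the full symmetric group $S_3$. A $\Q$-rational permutation of $\scrT'(\Qbar)$ must lie in the centralizer of this Galois action in $\Sym(\scrT'(\Qbar)) \cong S_3$; that centralizer is trivial, so $\phi|_{\scrT'} = \id$. Any element of $\PGL_2$ fixing three distinct geometric points is trivial, giving $\phi = \id$.

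With $\phi = \id$, the linear map $\tilde f$ restricts for each $t \in \PP^1$ to a linear isomorphism between the corresponding quadrics in the two pencils. Restricting to $t \in \scrT$ yields isomorphic rank-$4$ quadrics over $\kk(\scrT) = \Q(\sqrt{-3})$, whose discriminants must coincide in $\Q(\sqrt{-3})^\times/\Q(\sqrt{-3})^{\times 2}$. By the discriminant computation recorded in Section~\ref{sec:Local}, these classes are $[d]$ and $[d']$, forcing $dd' \in \Q(\sqrt{-3})^{\times 2}$ and contradicting the hypothesis. The main obstacle is the rigidity step: proving $\phi$ is trivial. This hinges on the Galois group of the cubic factor of $\Phi$ being all of $S_3$ rather than $A_3$, which is visible from $S^3-10$ and the fact that its discriminant $-27\cdot 100$ is not a rational square; once rigidity is in hand, the rest reduces to a formal comparison of discriminants.
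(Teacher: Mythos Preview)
Your proof is correct, but it takes a longer route than the paper's. The paper avoids the rigidity step entirely: it simply observes that a linear automorphism of $\PP^4_\Q$ preserves the rank, the field of definition, and the discriminant of any quadric hypersurface. Since each pencil contains exactly two rank-$4$ quadrics defined over $\Q(\sqrt{-3})$ (those above $\scrT$; the three above $\scrT'$ live over the cubic field $\Q(\zeta)$, not over $\Q(\sqrt{-3})$), the image under $\tilde f$ of the pair for $X_d$ must be the pair for $X_{d'}$, and matching discriminants forces $[d]=[d']$ in $\Q(\sqrt{-3})^\times/\Q(\sqrt{-3})^{\times 2}$. No identification of the induced map $\phi$ on the base is required.

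Your detour through $\phi=\id$ is sound --- the substitution $T=S-1$ indeed gives $S^3-10$, the Galois group is $S_3$, and the centralizer argument is valid --- and it yields the stronger statement that $\tilde f$ matches the pencils fibrewise in the chosen coordinates. That extra rigidity could be useful if, say, the two quadrics above $\scrT$ had distinct discriminants and one needed to know which went to which; here it is unnecessary, and the paper's argument is shorter because it only needs the \emph{set} of rank-$4$ quadrics over $\Q(\sqrt{-3})$ to be preserved.
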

\begin{proof}
   The anticanonical embedding of a quartic del Pezzo surface \(X\) presents \(X\) as an intersection of quadrics in \(\PP^4\).  Thus, \(X_d\simeq X_{d'}\) if and only if there is a change of coordinates of \(\PP^4\) that gives the isomorphism. A linear automorphism of \(\PP^4\) will preserve the field of definition, the rank and the discriminant of any quadric hypersurface. Note that each of \(X_d\) and \(X_{d'}\) is contained in exactly two rank \(4\) quadrics defined over \(\Q(\sqrt{-3})\) with discriminant \(d\Q(\sqrt{-3})^{\times2}\) and \(d'\Q(\sqrt{-3})^{\times2}\) respectively. Thus, if \(dd'\notin \Q(\sqrt{-3})^{\times2}\), then \(X_d\not\simeq X_{d'}\).
\end{proof}

\begin{proof}[Proof of Theorem~\ref{thm:Intro}]
   Corollary~\ref{cor:main} and Lemma~\ref{lem:Nonisomorphic} imply that each squarefree \(d\) that satisfies the conditions of Theorem~\ref{thm:d} gives a quartic del Pezzo surface with no points of degree \(1\) or \(2\) and that these are all pairwise nonisomorphic. Thus, by the Springer and Amer-Brumer theorem, the index of \(X_d\) is greater than \(1\). Hence, by~\cite{CV}*{Theorem 1.1}, \(X_d\) must have index equal to \(2\). It remains to show that there are infinitely many squarefree integers \(d\) that satisfy the conditions of Theorem~\ref{thm:d}. 
   
   Consider the Galois closure \(L\) of \(\Q(\sqrt{-1}, {\sqrt{-3}},\sqrt{\zeta})\). By the Chebotarev Density Theorem, there is an infinite set of primes \(\calP\) that split completely in \(L\).  We claim that for all \(p\in \calP\), we have that \(p\equiv 1 \bmod 12\) and that \(\zeta\) is a square in \(\Q_p\otimes \Q(\zeta)\).  This claim together with the fact that \(\zeta\) is a square in \(\Q_7\otimes \Q(\zeta)\) then implies that for any finite subset \(S\subset \calP\), the integer \(d = -7\prod_{p\in S}p\) will satisfy the conditions of Theorem~\ref{thm:d}.
   
   Now let us prove the claim. If \(p\) splits completely in \(L\), then it splits completely in \(\Q(\sqrt{-1})\), in \(\Q({\sqrt{-3}})\) and in \(\Q(\sqrt{\zeta})\).  Thus, \(-1, -3\in \Q_p^{\times 2}\), which, by quadratic reciprocity, implies that \(p\equiv 1 \bmod 4\) and \(p\equiv 1 \bmod 3\). Further, since \(p\) splits completely in the Galois closure of \(\Q(\sqrt{\zeta})\), the cubic equation \(T^3 + 3T^2 + 3T - 9\) has three roots over \(\Q_p\), and, moreover, each root is a square.
\end{proof}

\section{Complements}

\subsection{Degree $6$ points on quartic del Pezzo surfaces}\label{sec:Degree6}

Every quartic del Pezzo surface over a number field has a closed point of degree equal to $2 \bmod 4$ by~\cite{CV}*{Theorem 1.1}. The proof constructs an adelic $0$-cycle of degree $1$ on the corresponding variety $\calG$ that is orthogonal to $\Br(\calG)$. A closer examination of the proof shows that one can also construct an \emph{effective} adelic $0$-cycle of degree $3$ that is orthogonal to $\Br(\calG)$. This allows us to deduce the following.

\begin{thm}\label{thm:6}
	Let $X$ be a smooth quartic del Pezzo surface over a number field. Assume Schinzel's hypothesis. Then $X$ has a closed point of degree $6$.
\end{thm}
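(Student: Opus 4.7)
The plan is to combine a case analysis on the minimal degree of a closed point of $X$ with the Brauer--Manin framework of \cite{CV}. Throughout, let $\calG \to \PP^1$ denote the scheme parameterizing lines on the quadrics in the pencil containing $X$, so that a closed point of $\calG$ of degree $d$ corresponds to a line $L$ defined over a field $k'/k$ of degree $d$, and $L \cap X$ is a degree $2$ subscheme of $L$ over $k'$, hence a subscheme of $X$ of degree $2d$ over $k$.

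In the easy cases I would argue directly. If $X$ has a $k$-rational point $P_0$, then a generic hyperplane section of $X$ through $P_0$ is a smooth genus $1$ curve $E/k$ with $P_0\in E(k)$; applying Hilbert irreducibility to a degree $6$ morphism $E\to \PP^1$ produces a closed point of $E$, and hence of $X$, of degree $6$. If $X$ has no $k$-rational point but does have a closed point of degree $2$ over some quadratic extension $k'/k$, apply the same argument to $X_{k'}$ to produce a closed point of $X_{k'}$ of degree $3$ over $k'$; under the \'etale degree $2$ morphism $X_{k'}\to X$ this pushes down to a closed point of $X$ of degree either $6$ or $3$ over $k$, and the latter is excluded since $X$ has index dividing $2$ by \cite{CV}*{Theorem 1.1}, so all closed points of $X$ have even degree.

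In the remaining case, $X$ has no closed point of degree at most $2$, and by \cite{CV}*{Theorem 1.1} the index of $X$ equals $2$. Here I would invoke the Brauer--Manin framework. A careful reading of the proof of \cite{CV}*{Theorem 1.1} should reveal that the adelic $0$-cycle of degree $1$ on $\calG$ constructed there is naturally the difference $z_3 - z_2$ of effective adelic $0$-cycles of degrees $3$ and $2$, and moreover that the effective component $z_3$ of degree $3$ is already orthogonal to $\Br(\calG)$. Under Schinzel's hypothesis, I would then apply the fibration techniques for $0$-cycles (originating with Colliot-Th\'el\`ene--Sansuc--Swinnerton-Dyer and extended by Liang and Harpaz--Wittenberg) to the fibration $\pi:\calG \to \PP^1$; the smooth fibers of $\pi$ are geometrically rational (being Fano varieties of lines on smooth rank $5$ quadric threefolds), which is the setting in which these conditional results apply. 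The output is an effective rational $0$-cycle of degree $3$ on $\calG$.

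To conclude, note that in this case $\calG$ has no $k$-rational point, since a $k$-line in a quadric of the pencil would meet $X$ in a closed subscheme of degree $2$, giving a closed point of $X$ of degree at most $2$. Consequently, any effective rational $0$-cycle of degree $3$ on $\calG$ must be a single closed point $P$ of degree $3$. The corresponding line $L$ over $k':=\kappa(P)$ meets $X$ in a degree $2$ subscheme of $L$ over $k'$; if this subscheme decomposed as two $k'$-points or as a non-reduced double $k'$-point, one would obtain closed points of $X$ of degree dividing $3$, contradicting the index being exactly $2$. Hence $L\cap X$ is a single closed point of degree $2$ over $k'$, yielding a closed point of $X$ of degree $6$ over $k$. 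The main obstacle is the first step, the extraction of an effective degree $3$ adelic $0$-cycle orthogonal to $\Br(\calG)$ from the proof of \cite{CV}*{Theorem 1.1}; the Schinzel--fibration step, while technical, should proceed along well-established lines.
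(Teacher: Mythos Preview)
Your route differs from the paper's, and the key step in your hard case has a gap.

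The paper does not split into cases on the minimal degree of a closed point. It argues uniformly that it suffices to produce a cubic extension $K/k$ with $\calG(\A_K)^{\Br}\ne\emptyset$: then Serre's result (under Schinzel) that the Brauer--Manin obstruction is the only one for pencils of Severi--Brauer varieties gives $\calG(K)\ne\emptyset$, hence a quadratic point on $X_K$. If $\calG(\A_k)^{\Br}\ne\emptyset$ then \emph{any} cubic $K$ works, by \cite{CV}*{Lemma~3.2}. In the exceptional case $\calG(\A_k)^{\Br}=\emptyset$, the paper invokes the explicit structure from \cite{CV}*{Remark~6.2 and \S7A}: $\Br(\calG)/\Br_0(\calG)$ is generated by a single order~$2$ class $\beta_\scrT$ attached to an irreducible degree~$2$ subscheme $\scrT\subset\scrS$, and there is a place $v$, nonsplit in $\kk(\scrT)$, at which $\beta_\scrT$ becomes nonconstant on $\calG(\kk(\scrT_{k_v}))$. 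One then chooses any cubic $K$ with $K\otimes_k k_v\simeq \kk(\scrT_{k_v})\times k_v$; at the place $w$ of $K$ with $K_w\simeq\kk(\scrT_{k_v})$ the evaluation map is nonconstant, so $\calG(\A_K)^{\Br}\ne\emptyset$. The cubic extension is thus built by hand from the local data of the obstruction, and the passage to a global point uses only a rational-point theorem over $K$.

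Your hard case instead asks the $0$-cycle fibration machinery (Liang, Harpaz--Wittenberg) to convert an effective adelic $0$-cycle of degree~$3$ orthogonal to $\Br(\calG)$ into an \emph{effective} rational one. That is the gap: those theorems produce $0$-cycles of the required degree up to rational equivalence, not effective representatives, and effectivity is exactly what lets you conclude that the output is a single degree~$3$ closed point of $\calG$. The natural repair --- push the local cycles down to $\PP^1$, approximate by a global degree~$3$ point $\Spec K\hookrightarrow\PP^1$, then run a rational-point argument for $\calG_K\to\PP^1_K$ --- is essentially the paper's strategy, and you would still need to check $\calG(\A_K)^{\Br}\ne\emptyset$, which is precisely where the explicit choice of $K$ above does the work. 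Your Hilbert-irreducibility treatment of the easy cases is correct but unnecessary in the paper's framework, where those cases are absorbed into the single observation that $\calG(\A_k)^{\Br}\ne\emptyset$ implies $\calG(\A_K)^{\Br}\ne\emptyset$ for every extension $K$.
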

\begin{proof}
	It will be enough to show that there is a cubic extension $K/k$ such that, for the variety $\calG/k$ parameterizing lines on quadrics in the pencil of quadrics containing $X$, we have $\calG(\A_K)^{\Br} \ne \emptyset$. Indeed, we may then apply an unpublished result of Serre (see~\citelist{\cite{CTSD94}*{Section 4, Theorem 4.2}, \cite{Serre-GalCohom}*{Chap. II, Annexe, Th\'eor\`eme 7.6}} for details)
   that, assuming Schinzel's hypothesis, the Brauer-Manin obstruction is the only obstruction to the Hasse principle for pencils of Severi-Brauer varieties. Thus we have $\calG(K) \ne \emptyset$ and, consequently, there is a closed point of degree $2$ on $X_K$~\cite{CV}*{Proposition 4.4(1)}. This gives a point of degree $3\times 2 = 6$ on $X$ over $k$.
  
   If $\calG(\A_k)^{\Br} \neq \emptyset$, then  by \cite{CV}*{Lemma 3.2} we have $\calG(\A_K)^{\Br} \ne \emptyset$ for any extension $K/k$. So we may assume that $\calG(\A_k)^{\Br} = \emptyset$. Hence, we are in the exceptional case that is not covered by~\cite{CV}*{Corollary 6.1}, which is discussed in \cite{CV}*{Remark 6.2 and Section 7A}. We have that $\Br(\calG)/\Br_0(\calG)$ is cyclic, generated by an element $\beta_\scrT \in \Br(\calG)$ of order $2$ corresponding to an irreducible degree $2$ subscheme $\scrT$ of the locus of singular quadrics in the pencil containing $X$. Moreover, the discussion in \cite{CV}*{Section 7A} gives a place $v$ of $k$ (in the set $R'_\scrT$ described there) such that $v$ does not split in $\kk(\scrT)$ and the evaluation map $\beta_\scrT:\calG(\kk(\scrT_{k_v})) \to \Br(\kk(\scrT_{k_v}))$ is nonconstant. Let $K/k$ be any cubic extension such that $K \otimes k_v \simeq k_v \times \kk(\scrT_{k_v})$ (i.e., \(K/k\) is a degree \(3\) extension where \(v\) has one degree \(1\) place lying above it and the other splitting is determined by the splitting in the quadratic extension \(\kk(\scrT)\)). Then $K$ has a place $w \mid v$ such that $K_w \simeq \kk(\scrT_{k_v})$ and for this place the evaluation map $\inv_w\circ \beta_\scrT: \calG(K_w) \to \Q/\Z$ is nonconstant. It follows that $\calG(\A_K)^{\Br} \ne \emptyset$.
\end{proof}

\begin{rmk}
	The proof above gives explicit cubic extensions $K/k$ such that $\calG(\A_K)^{\Br} \ne \emptyset$. For $X=X_d$ with $d$ as in Theorem~\ref{thm:d}, it gives that $\calG(\A_K)^{\Br} \ne \emptyset$ for the cubic extension $K = \Q(\zeta)$. For specific values of $d$ it is possible to check directly that $\calG(\Q(\zeta)) \ne \emptyset$ by searching for quadrics in the pencil which contain $K$-rational lines (which can be checked by computing the Hasse invariant of the corresponding quadratic form). Using this approach we found $K$-points on the variety $\calG$ corresponding to $X_d$ for all $d$ with $|d|<1000$ which satisfy the hypothesis of Theorem~\ref{thm:d}~\cite{code}. For example, when $d = -7$ we find that $\calG_t(\Q(\zeta)) \ne \emptyset$ for the point $t = ( 5\zeta + 3 : 7) \in \PP^1(\Q(\zeta))$.
\end{rmk}


\subsection{Comparison with \cite{CV}*{Theorem 1.2}}\label{sec:CompareToCV}

We will now show that the surfaces $X_d$ in the theorem do not satisfy the hypotheses of \cite{CV}*{Theorem 1.2(5)} (so the results of that paper imply only that \(X_d\) has a $0$-cycle of degree $2$, not necessarily a quadratic point). Specifically, we note that $X_d = V(Q_{\sqrt{-3}}) \cap V(Q_{-{\sqrt{-3}}})$ for the singular quadrics $V(Q_{\pm{\sqrt{-3}}}) \subset \PP^4_{\Q({\sqrt{-3}})}$ and we will show that there are an odd number of places of \(\Q({\sqrt{-3}})\) for which these fail to have smooth local points.

\begin{lemma}\label{lem:compare}
	Assume that \(d\equiv 5 \bmod 12\). Then, the singular quadric $V(Q_{\sqrt{-3}}) \subset \PP^4_{\Q({\sqrt{-3}})}$ has smooth points over all completions of $\Q({\sqrt{-3}})$ except at the unique prime of $\Q({\sqrt{-3}})$ above $2$ (where it does not have smooth points).
\end{lemma}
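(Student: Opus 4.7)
The plan is to reduce the question to the isotropy of an explicit rank $4$ quadratic form, and then analyze the places one at a time.

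\emph{Reduction to a smooth quadric surface.} The $u_0, u_1$-part of $Q_{\sqrt{-3}}$ collapses to a single square: $d u_0^2 - 2\sqrt{-3} d u_0 u_1 - 3d u_1^2 = d(u_0 - \sqrt{-3} u_1)^2$. Setting $v_0 := u_0 - \sqrt{-3} u_1$, we obtain $Q_{\sqrt{-3}} = d v_0^2 + Q'(u_2, u_3, u_4)$, and the variable $u_1$ disappears. Hence $V(Q_{\sqrt{-3}}) \subset \PP^4_K$ is a cone with vertex $(\sqrt{-3}:1:0:0:0)$ (its unique singular point) over the smooth quadric surface $V(R) \subset \PP^3_K$, where $R := d v_0^2 + Q'$; thus $V(Q_{\sqrt{-3}})$ has a smooth $K_v$-point if and only if $R$ is isotropic over $K_v$. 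A direct calculation gives $\det Q' = -108 \equiv -3 \equiv 1 \pmod{(K^\times)^2}$, so $\disc R \equiv d \pmod{(K^\times)^2}$.

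\emph{Isotropy at every place except $\qq_2$.} The complex place is trivial. At a finite $v$ above $p \nmid 6d$, the form $R$ is unimodular ($\det = -108 d$ is a unit), so its reduction mod $v$ is a non-degenerate rank $4$ form over a finite field of odd characteristic, hence isotropic by Chevalley--Warning, and Hensel lifts this. At $v$ above an odd prime $p \mid d$, $Q'$ alone is unimodular at $v$ and isotropic by the same argument; hence so is $R$. At the unique (ramified) place $\qq_3$ above $3$, the principal units of $K_{\qq_3}$ form a pro-$3$ group in which squaring is a bijection, so a unit is a square iff its reduction equals $1 \in \F_3^\times$; as $d \equiv 5 \equiv 2 \pmod 3$, $d$ is not a square in $K_{\qq_3}^\times$. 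Hence $\disc R$ is nontrivial at $\qq_3$, which forces $R$ isotropic (every rank $4$ form over a non-archimedean local field with nontrivial discriminant is isotropic).

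\emph{Anisotropy at $\qq_2$ and main obstacle.} The prime $2$ is inert in $K$, so $K_{\qq_2} = \Q_2(\sqrt{-3})$ is the unramified quadratic extension of $\Q_2$. One first checks that $d \in (K_{\qq_2}^\times)^2$: since $d \equiv 1 \pmod 4$, either $d \equiv 1 \pmod 8$ (a square already in $\Q_2^\times$) or $d \equiv 5 \equiv -3 \pmod 8$, in which case $d/(-3) \in (\Q_2^\times)^2$ and $-3 \in (K_{\qq_2}^\times)^2$. Thus $\disc R = 1$ at $\qq_2$, and $R$ is anisotropic over $K_{\qq_2}$ iff its Hasse--Witt invariant is the unique nontrivial class of $\Br_2(K_{\qq_2}) \cong \Z/2$. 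Diagonalizing $Q'$ over $K$ by successive completion of the square yields $Q' \cong \langle 6+\sqrt{-3},\; -18(1+2\sqrt{-3})/13,\; -2\sqrt{-3} \rangle$, and the identity $(6+\sqrt{-3}) \cdot \tfrac{-18(1+2\sqrt{-3})}{13} \cdot (-2\sqrt{-3}) = -108 \equiv 1 \pmod{(K^\times)^2}$ allows one to identify $R$ over $K_{\qq_2}$ with the $2$-fold Pfister form $\langle\langle -(6+\sqrt{-3}),\; 2\sqrt{-3} \rangle\rangle$. Its anisotropy then reduces to showing that the Hilbert symbol $\bigl(-(6+\sqrt{-3}),\; 2\sqrt{-3}\bigr)_{K_{\qq_2}}$ is nontrivial, which is the main technical step. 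I expect this to be done via the corestriction isomorphism $\Cor \colon \Br_2(K_{\qq_2}) \xrightarrow{\sim} \Br_2(\Q_2)$, reducing to Hilbert symbol computations over $\Q_2$ where explicit formulas are available. The bookkeeping of the $\sqrt{-3}$-contributions---$\sqrt{-3}$ being a non-square unit reducing to $1$ in the residue field---is where the most care is required.
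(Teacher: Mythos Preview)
Your reduction to the rank~$4$ form $R$ and your treatment of all places away from $2$ are correct and agree with the paper (which phrases the good-reduction and $p\mid d$ cases uniformly via ``the unique anisotropic rank~$4$ local form has square discriminant'', but the content is the same, including the $\qq_3$ argument via $d\equiv 2\bmod 3$).

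The genuine gap is at $\qq_2$: you set up the Pfister form and then write ``I expect this to be done via the corestriction isomorphism'', without carrying out the computation. That computation is the entire point of the lemma, and the paper handles it by a different and cleaner route that you are missing. First, the paper uses a tidier diagonalization over $K$,
\[
Q_{\sqrt{-3}}\;\cong\;\langle d,\ -\sqrt{-3},\ 2\sqrt{-3},\ 6\rangle\;\cong\;\langle d,\ -\sqrt{-3},\ 2\sqrt{-3},\ -2\rangle
\]
(using $-3\in K^{\times2}$), so that when $d\in K_v^{\times 2}$ the form becomes $\langle 1,-\sqrt{-3}\rangle\perp(-2)\langle 1,-\sqrt{-3}\rangle$, a norm-form equation whose isotropy is governed by the single Hilbert symbol $(2,\sqrt{-3})_{K_v}$. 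Second---and this is the idea you do not have---the paper \emph{never} evaluates this symbol at $\qq_2$ directly. Instead it computes $(2,\sqrt{-3})_{K_v}$ at every \emph{other} place (trivially $0$ for $v\nmid 6$ by tameness; equal to $\tfrac12$ at the ramified prime above~$3$, since $\sqrt{-3}$ is a uniformizer there and $2$ is a nonsquare in $\F_3$) and then invokes Hilbert reciprocity $\sum_v\inv_v(2,\sqrt{-3})=0$ to force $\inv_{\qq_2}(2,\sqrt{-3})=\tfrac12$.

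Your proposed direct attack via corestriction to $\Q_2$ is in principle viable, but the symbol $\bigl(-(6+\sqrt{-3}),\,2\sqrt{-3}\bigr)_{K_{\qq_2}}$ you land on is unpleasant to evaluate by hand; the global reciprocity trick bypasses this entirely and is what makes the paper's proof short.
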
	

\begin{proof}
   The field \(K= \Q({\sqrt{-3}})\) is totally imaginary, so it suffices to consider completions at nonarchimedean places \(v\).
	The quadric $V(Q_{\sqrt{-3}})$ has smooth points if and only if the rank $4$ quadratic form $Q_{\sqrt{-3}}$ is isotropic. Over a local field there is a unique anisotropic form of rank $4$, and it has square discriminant.
   We rewrite \(Q_{{\sqrt{-3}}}\) as
   \[
      d(u_0 - {\sqrt{-3}} u_1)^2 - {\sqrt{-3}}\left(u_4 -{\sqrt{-3}} u_3 + u_2\right)^2 + 2{\sqrt{-3}}\left({\sqrt{-3}} u_3 - u_2\right)^2 - 2({\sqrt{-3}} u_2)^2,
   \]
   and see that its discriminant is \(dK^{\times2}\). Thus, if \(d\not\in K_v^{\times2}\), the quadric \(V(Q_{{\sqrt{-3}}})\) has smooth \(K_v\)-points.  In particular, since \(d\equiv 2 \bmod 3\), \(V(Q_{{\sqrt{-3}}})\) has smooth \(K_v\)-points for the unique place \(v|3\).
   
   Let \(v\) be such that \(d \in K_v^{\times 2}\) and let \(L\) be the \'etale $K_v$-algebra \(K_v[z]/(z^2 - {\sqrt{-3}})\). Then for a suitable choice of linear forms \(\ell_0, \dots \ell_3\), we may write \(Q_{{\sqrt{-3}}}\) as
   \[
      \Norm_{L/K_v}(\ell_0 - {z}\ell_1) = 2\Norm_{L/K_v}(\ell_2 -{z}\ell_3).
   \]
   In particular, for $v$ such that \(d\in K_v^{\times2}\), \(V(Q_{{\sqrt{-3}}})\) has a smooth \(K_v\) point if and only if \(2 \in \Norm(L/K_v)\) or, equivalently, if and only if \({\inv_v}(2, {\sqrt{-3}})\) is trivial. We compute
   \[
      {\inv_v}(2,{\sqrt{-3}}) = \begin{cases}
         0 & v\nmid 6,\\
         \frac12 & v \mid 3.
      \end{cases}
   \]
   By Hilbert reciprocity, we deduce that \(\inv_w(2,{\sqrt{-3}}) = \frac12\) for the unique prime $w$ above $2$.
   \end{proof}

\begin{rmk}
	In the notation of \cite{CV}*{Definitions 5.7 and 5.15} the proof of the lemma shows that $C_\scrT := \Cor_{\kk(\scrT)/\Q}(\Clif(Q_\scrT))$ is equal to the class in $\Br(\Q)$ of the quaternion algebra ramified at $2$ and $3$ and that $R_\scrT = \{ 2 \}$. In particular, $R_\scrT$ has odd cardinality and we are in the exceptional case considered in \cite{CV}*{Remark 6.2 and Section 7A} where the results of that paper are not able to show $\calG(\A_\Q)^{\Br} \ne \emptyset$. 
\end{rmk}

\begin{rmk}
	For any rank $4$ quadric $V(Q) \subset \PP^4_L$ over a quadratic extension $L/k$ of a global field $k$ such that $V(Q)$ is not defined over $k$ and has smooth points over all but an odd number of completions of $L$,  \cite{CV}*{Corollary 6.3} gives that $\calG(\A_k)^{\Br} \ne \calG(\A_k)$ for the variety $\calG/k$ corresponding to pencil over $k$ containing $V(Q)$ and its Galois conjugate. To get that $\calG(\A_k)^{\Br} = \emptyset$ one must also have that the local evaluation maps are all constant. This is not the case for all surfaces \(X_d\) in our family. For example, if $\calG/\Q$ corresponds to the surface with $d = -79$, then it satisfies Lemma~\ref{lem:compare} because $-79 \equiv 5 \bmod 12$, but one can also check that the evaluation map is nonconstant at $v = 79$. We note that the second condition on $d$ in Theorem~\ref{thm:d} fails for $d = -79$.
\end{rmk}

\subsection{Finding surfaces and equations}\label{sec:finding}

We first identified candidate surfaces for the theorem by a computer search which we now describe. All computations were performed using the Magma computational algebra system \cite{BCP}. The code is publicly available~\cite{code}.

Following \cite{Flynn}, up to isomorphism any quartic del Pezzo surface \(X\) can be specified by the data $(\scrS,\eps_\scrS)$, where $\scrS \subset \A^1$ is a reduced degree $5$ subscheme and $\eps_\scrS \in \kk(\scrS)^\times$ is an element of square norm. Assuming Schinzel's hypothesis, if \(X\) has no quadratic points then $\scrS$ must contain an irreducible degree $2$ subscheme~\cite{CV}*{Theorem 1.2}. So we considered $\scrS$ defined by polynomials $f(T) = f_2(T)f_3(T)$ with the $f_i(T) \in \Z[x]$ irreducible monic polynomials of degree $i$ of discriminant up to some bound (taken from Magma's number field database). For each $f(T)$, we ran through $\eps_\scrS$ of the form $(d,\eps)\in (\Q[T]/\langle f_2(T)\rangle)^{\times} \times (\Q[T]/\langle f_3(T)\rangle)^{\times}\simeq \kk(\scrS)^{\times}$ with $d$ a nonzero squarefree integer of some bounded size and $\eps \in \Q[T]/\langle f_3(T) \rangle$ one of the finite set of square classes of $S$-units with square norm (for a fixed set $S$ of primes). The reason why we take the first component of $\eps_\scrS$ to lie in $\Q$ (rather than in $\Q[T]/\langle f_2(T)\rangle$) is that if this condition is not satisfied, then the Brauer group of the corresponding surface is trivial and there can be no Brauer-Manin obstruction to quadratic points.

For a given pair $(f(T),\eps_{\scrS})$ equations for the corresponding surface can be computed as follows. Letting $\theta$ denote the image of $T$ in $\Q[T]/\langle f(T)\rangle$, consider the equation
	\begin{equation}\label{eq:equation}
	(x_1 - \theta x_3)(x_1-\theta x_3) = \eps_{\scrS}(u_0 + u_1\theta + u_2\theta^2 + u_3\theta^3 + u_4\theta^4)^2\,.
	\end{equation}
	Multiplying this out and equating coefficients on the basis vectors $1,\theta,\dots,\theta^4$ of the $\Q$-vector space $\Q[T]/\langle f(T)\rangle$ yields five quadratic forms over $\Q$. The coefficients on $\theta^3$ and $\theta^4$ are quadratic forms involving only the $u_i$, and together they define a pencil of quadrics that has singular locus and discriminants isomorphic to $(f(T),\eps_{\scrS})$ by \cite{BBFL}*{Lemma 17}. 
	
	In the case we consider, we have that $f(T)=f_2(T)f_3(T)$ is reducible. We found that we obtain quadratic equations in block diagonal form (and with much smaller coefficients) if we express~\eqref{eq:equation} in terms of the basis $(1,0),(\theta_2,0),(0,1),(0,\theta_3),(0,\theta_3^2)$ of $\Q[\theta_2]\times \Q[\theta_3] = \Q[T]/\langle f(T) \rangle$ where $\theta_i$ denotes a root of  $f_i(T)$ (as opposed to the power basis.). 
	
	Having obtained quadratic forms defining the surface $X = X(f(T),\eps_{\scrS})$, we proceed to search for \(\Q\)-points on $\calG$ as follows. For a given $t \in \PP^1(\Q)$, we can check if $\calG_t(\Q) \ne \emptyset$ by computing the Hasse invariant of the quadratic form $Q_t$ (which reduces to 
   diagonalizing and computing Hilbert symbols). If no points are found among $t$ of small height, we turn to trying to show that $\calG(\Q) = \emptyset$. In this case we check whether the evaluation maps $\inv_v\circ \beta \colon \calG(\Q_v) \to \Q/\Z$ appear to be constant for primes $v$ of bad reduction. To do this we find those $t = (a:b)$ among our search space such that $\calG_t(\Q_v) \ne \emptyset$ (using the Hasse invariant) and for each such $t$ compute $\inv_v(\beta(\calG_t(\Q_v))) = \inv_v(d,f_2(t))$. If all $t$ considered yield the same value, we suspect that the evaluation map is constant and have determined a value in its image.
	
	Following this process we eventually found a surface (the surface $X_d$ with $d = -19$) for which $\calG$ had no $\Q$-points on fibers above points of small height and the evaluation maps all appeared to be constant, with an odd number of them nonzero. Thus, we expected that $\calG$ had a Brauer-Manin obstruction to $\Q$-points. Armed with the small height equations~\eqref{eq:Qs} for a candidate we were then able to prove (as described in the sections above) that there is in fact a Brauer-Manin obstruction, and that with minor modifications the same proof works for infinitely many other values of $d$. 


	\begin{bibdiv}
        \begin{biblist}

    \bib{Amer}{article}{
    	author={Amer, M.},
    	title = {Quadratische Formen \"uber Funktionenk\"orpern},
    	note = {unpublished dissertation},
    	date={1976},
    	publisher={Johannes Gutenberg Universit¨at, Mainz}
	}

\bib{BCP}{article}{
   author={Bosma, Wieb},
   author={Cannon, John},
   author={Playoust, Catherine},
   title={The Magma algebra system. I. The user language},
   note={Computational algebra and number theory (London, 1993)},
   journal={J. Symbolic Comput.},
   volume={24},
   date={1997},
   number={3-4},
   pages={235--265},
   issn={0747-7171},
   doi={10.1006/jsco.1996.0125},
}

\bib{BBFL}{article}{
   author={Bright, M. J.},
   author={Bruin, N.},
   author={Flynn, E. V.},
   author={Logan, A.},
   title={The Brauer-Manin obstruction and Sha[2]},
   journal={LMS J. Comput. Math.},
   volume={10},
   date={2007},
   pages={354--377},
   doi={10.1112/S1461157000001455},
}

    \bib{Brumer}{article}{
    author={Brumer, Armand},
    title={Remarques sur les couples de formes quadratiques},
    language={French, with English summary},
    journal={C. R. Acad. Sci. Paris S\'{e}r. A-B},
    volume={286},
    date={1978},
    number={16},
    pages={A679--A681},
    issn={0151-0509},
    }

\bib{CT}{article}{
   author={Colliot-Th\'el\`ene, Jean-Louis},
   title={Retour sur l'arithm\'etique des intersections de deux quadriques},
   note={With an appendix by Alexander Kuznetsov},
   language={French, with English and French summaries},
   journal={J. Reine Angew. Math.},
   volume={806},
   date={2024},
   pages={147--185},
   issn={0075-4102},
   doi={10.1515/crelle-2023-0081},
}

    \bib{GoodReductionBM}{article}{
    author={Colliot-Th\'{e}l\`ene, Jean-Louis},
    author={Skorobogatov, Alexei N.},
    title={Good reduction of the Brauer-Manin obstruction},
    journal={Trans. Amer. Math. Soc.},
    volume={365},
    date={2013},
    number={2},
    pages={579--590},
    issn={0002-9947},
    doi={10.1090/S0002-9947-2012-05556-5},
    }

    \bib{CTS-BrauerBook}{book}{
      author={Colliot-Th\'el\`ene, Jean-Louis},
      author={Skorobogatov, Alexei N.},
      title={The Brauer-Grothendieck group},
      series={Ergebnisse der Mathematik und ihrer Grenzgebiete. 3. Folge. A
      Series of Modern Surveys in Mathematics [Results in Mathematics and
      Related Areas. 3rd Series. A Series of Modern Surveys in Mathematics]},
      volume={71},
      publisher={Springer, Cham},
      date={2021},
      pages={xv+453},
      isbn={978-3-030-74247-8},
      isbn={978-3-030-74248-5},
      review={\MR{4304038}},
      doi={10.1007/978-3-030-74248-5}
   }

    \bib{CTSD94}{article}{
      author={Colliot-Th\'el\`ene, Jean-Louis},
      author={Swinnerton-Dyer, Peter},
      title={Hasse principle and weak approximation for pencils of
      Severi-Brauer and similar varieties},
      journal={J. Reine Angew. Math.},
      volume={453},
      date={1994},
      pages={49--112},
      issn={0075-4102},
      review={\MR{1285781}},
      doi={10.1515/crll.1994.453.49},
   }
   
      \bib{Coray}{article}{
        author={Coray, D. F.},
        title={Algebraic points on cubic hypersurfaces},
        journal={Acta Arith.},
        volume={30},
        date={1976},
        number={3},
        pages={267--296},
        issn={0065-1036},
        review={\MR{429731}},
        doi={10.4064/aa-30-3-267-296},
    }

\bib{code}{misc}{
   title={\textup{dp4QuadPts.m}, Magma code available at: \url{https://github.com/brendancreutz/Quadratic-Points-on-Quartic-del-Pezzo-Surfaces}},
   note={},
   year={2024},
   label={Code}
}   

\bib{CV}{article}{
   author={Creutz, Brendan},
   author={Viray, Bianca},
   title={Quadratic points on intersections of two quadrics},
   journal={Algebra Number Theory},
   volume={17},
   date={2023},
   number={8},
   pages={1411--1452},
   issn={1937-0652},
   doi={10.2140/ant.2023.17.1411},
}

 \bib{Flynn}{article}{
    author={Flynn, E. V.},
    title={Homogeneous spaces and degree 4 del Pezzo surfaces},
    journal={Manuscripta Math.},
    volume={129},
    date={2009},
    number={3},
    pages={369--380},
    issn={0025-2611},
    doi={10.1007/s00229-009-0268-1},
 }

 \bib{Lam}{book}{
   author={Lam, T. Y.},
   title={The algebraic theory of quadratic forms},
   series={Mathematics Lecture Note Series},
   note={Revised second printing},
   publisher={Benjamin/Cummings Publishing Co., Inc., Advanced Book Program,
   Reading, MA},
   date={1980},
   pages={xiv+343},
   isbn={0-805-35665-1},
   review={\MR{0634798}},
}

\bib{Serre-GalCohom}{book}{
   author={Serre, Jean-Pierre},
   title={Cohomologie galoisienne},
   language={French},
   series={Lecture Notes in Mathematics},
   volume={5},
   edition={5},
   publisher={Springer-Verlag, Berlin},
   date={1994},
   pages={x+181},
   isbn={3-540-58002-6},
   review={\MR{1324577}},
   doi={10.1007/BFb0108758},
}

\bib{Springer}{article}{
   author={Springer, T. A.},
   title={Quadratic forms over fields with a discrete valuation. II. Norms},
   journal={Nederl. Akad. Wetensch. Proc. Ser. A. {\bf 59} = Indag. Math.},
   volume={18},
   date={1956},
   pages={238--246},
}
		\end{biblist}
	\end{bibdiv}

\end{document}